\newcommand{\bb}{\mathbb}
\theoremstyle{plain}
\newtheorem{theorem}{Theorem}[section]
\newtheorem{proposition}[theorem]{Proposition}
\newtheorem{lemma}[theorem]{Lemma}
\newtheorem{corollary}[theorem]{Corollary}
\newtheorem{counterexample}[theorem]{Counterexample}
\newtheorem*{claim*}{Claim}
\theoremstyle{definition}
\newtheorem{remark}[theorem]{Remark}
\newtheorem{definition}[theorem]{Definition}
\newcommand\abs[1]{\left|#1\right|}
\newcommand{\floor}[1]{\left\lfloor#1\right\rfloor}
\def\F{\mathbb{F}}
\def\Z{\mathbb{Z}}
\def\Q{\mathbb{Q}}
\def\S{\mathbb{S}}
\def\ord{\operatorname{ord}}
\DeclareMathOperator{\Spec}{Spec}
\begin{document}
\title{Anomalous Primes and the Elliptic Korselt Criterion}

\author[L. Babinkostova]{L. Babinkostova}
\address{Department of Mathematics\\ Boise State University\\
Boise, ID 83725, USA}
\email{liljanababinkostova@boisestate.edu}

\author[J. C.\ Bahr]{J. C.\ Bahr}
\address{Department of Mathematics\\ University of California, Los Angeles\\
Los Angeles, California 90095}
\email{jbahr@ucla.edu }

\author[Y. H. Kim]{Y. H. Kim}
\address{Department of Mathematics\\ Columbia University\\
New York, NY 10027, USA}
\email{yujin.kim@columbia.edu }

\author[E. Neyman]{E. Neyman}
\address{Department of Mathematics\\ Princeton University\\
Princeton, NJ 08544, USA}
\email{eneyman@princeton.edu}

\author[G. K.\ Taylor]{G. K.\ Taylor}
\address{Department of Mathematics, Statistics, and Computer Science\\ University of Illinois, Chicago\\
Chicago, IL 60607, USA}
\email{gtaylo9@uic.edu}

\thanks{Supported by National Science Foundation under the Grant number DMS-1062857}
\thanks{$^{\S}$ Corresponding Author: liljanababinkostova@boisestate.edu}
\subjclass[2010]{14H52, 14K22, 11G07, 11G20, 11A41, 11A51} 
\keywords{Elliptic curves, Anomalous primes, Elliptic Korselt numbers} 

\begin{abstract}
We explore the relationship between elliptic Korselt numbers of Type I, a class of pseudoprimes introduced by Silverman in \cite{Korselt}, and anomalous primes. We generalize a result in \cite{Korselt} that gives sufficient conditions for an elliptic Korselt number of Type I to be a product of anomalous primes. Finally, we prove that almost all elliptic Korselt numbers of Type I of the form $n=pq$ are a product of anomalous primes.
\end{abstract}

\maketitle

\section{Introduction}\label{intro}

In 1989, Gordon \cite {DG,DG2} defined elliptic pseudoprimes for CM elliptic curves, the first to do so according to Silverman \cite{Korselt}. For a full discussion of Gordon's approach, see \cite[Remark 4]{Korselt} and the works cited there. In this paper, we work with the definition introduced by Silverman \cite{Korselt} which is well-defined for arbitrary elliptic curves.

Namely, for a given elliptic curve $E/\Q$ and point $P\in E(\Z/n\Z)$ a natural number $n$ is an \emph{elliptic pseudoprime with respect to} $P \in E$ if $n$ has at least two distinct prime factors, $E$ has good reduction at every prime $p$ dividing $n$, and $(n+1-a_n)P \equiv 0\pmod n$ where $a_n$ denotes the $n^{\mathrm{th}}$ coefficient of the \emph{L}-series of $E/\Q$. When we write $E(\Z / n\Z)$, we follow the convention of \cite[Remark 2]{Korselt} by assuming that $E$ has good reduction at all primes $p$ dividing $n$. In this case, a minimal Weierstrass equation defines a smooth group scheme $E  \to \Spec(\Z / n\Z)$ (see \cite[Section IV.5]{silBook2} for details of the construction). Then the $\Z / n\Z$-points of this group scheme form an abelian group, which we denote by $E(\Z / n\Z)$.

A composite integer $n$ is an \emph{elliptic Carmichael number} for a given elliptic curve $E/\Q$ if $n$ is an elliptic pseudoprime for every point $P \in E(\Z / n\Z)$ (note that by our convention, $E$ must have good reduction at the primes dividing $n$). In analogy with classical case of the Korselt criterion for Carmichael numbers,  Silverman \cite{Korselt} gives two Korselt-type criteria for elliptic Carmichael numbers, introducing the notion of an elliptic Korselt number of Type I and of Type II. The elliptic Korselt criterion of Type I is a practical sufficient condition for elliptic Carmichael numbers, given the prime factorization of the number.

For a given elliptic curve $E / \Q$, a prime $p$ is \emph{anomalous} if $E$ has good reduction at $p$ and $\#E(\F_p) = p$. In \cite[Proposition 17]{Korselt}, Silverman proves that if $n = pq$ is a Type I elliptic Korselt number for $E$ and $p$ is not too small with respect to $q$, then $p$ and $q$ are anomalous primes for $E$. Silverman notes that Type I elliptic Korselt numbers of the form $pq$ are interesting since there are no classical Carmichael numbers of this form. In this paper, we further explore the connection between squarefree elliptic Korselt numbers and anomalous primes.

In Section 2, we prove a generalization of \cite[Proposition 17]{Korselt}. We show that if $n = p_1\cdots p_m$ is a squarefree Type I elliptic Korselt number with $p_1 < \cdots < p_m$ and $\frac{\sqrt{p_m}}{4^m} \leq p_1\cdots p_{m-1} \leq 4^m$, then $p_m$ is anomalous and $a_n = 1$. Hence all but an even number of the primes $p_i$ are anomalous, and if $p_i$ is not anomalous, then $a_{p_i} = -1$. Furthermore, we note an error in the proof of \cite[Proposition 17]{Korselt}, providing a counterexample and the corrected statement. In particular, we show that if $n = pq$ is an elliptic Korselt number of Type I with $p < q$ for $E/\Q$ and $13 \leq p \leq \sqrt{q}/16$, then $p$ and $q$ are anomalous for $E$. 

In Section 3, we prove that for an elliptic Korselt number of Type I for $E/\Q$ of the form $n=pq$ where $p<q$ are prime, the probability that $p$ and $q$ are anomalous approaches 1 as $p,q \to \infty$. Our result relies on a result proven in the preprint \cite[Section 6]{BH-EK} which appeared as a conjecture in an early draft of this paper. Computational evidence for our original conjecture is collected in an appendix.

Finally, we combine our results with a result from \cite{Q} to show that (assuming the Hardy-Littlewood Conjecture) there are infinitely many Type I elliptic Korselt numbers for any curve $E: y^2 = x^3 + D$, where $D\in \mathbb{Z}$ is neither a square nor a cube in $\mathbb{Q}(\sqrt{-3})$ and $D \neq 80d^6$ for any $d \in \mathbb{Z}[(1+\sqrt{-3})/2]$.

\section{Squarefree Elliptic Korselt Numbers of Type I}\label{type 1}

The classical notions of pseudoprimes and Carmichael numbers are related to the orders of numbers in the multiplicative group $(\Z/n\Z)^*$. These concepts can be generalized to other algebraic groups, such as elliptic curves. The notions of elliptic pseudoprimes and elliptic Carmichael numbers were introduced in \cite{DG2} for curves with complex multiplication.

In \cite{Korselt} these notions were extended to arbitrary elliptic curves $E/\Q$. The definition of an elliptic pseudoprime for an arbitrary elliptic curve is as follows: let $n$ be a positive integer greater than $1$, let $E/\Q$ be an elliptic curve given by a minimal Weierstrass equation, and let $P\in E(\Z/n\Z)$. Write the $L$-series of $E/\Q$ as $L(E/\Q,s) =\sum a_n/n^s$. Then $n$ is an \emph{elliptic pseudoprime} for $(E,P)$ if $n$ has at least two distinct prime factors, $E$ has good reduction at every prime dividing $n$, and $P$ is $(n+1-a_n)$-torsion in $E(\Z/n\Z)$. Following the analogy with classical pseudoprimes, if $n=p$ is a prime of good reduction, the last condition holds trivially. Indeed, $E(\Z/p\Z)$ is an elliptic curve over $\F_p$ of order $p+1-a_p$, so every point is $(p+1-a_p)$-torsion.

The definition of an elliptic Carmichael number for an arbitrary elliptic curve is as follows: let $n$ be a positive integer greater than $1$ and let $E/\Q$ be an elliptic curve. Then $n$ is an \emph{elliptic Carmichael number} for $E$ if $n$ is an elliptic pseudoprime for $(E,P)$ for every point $P\in E(\Z/n\Z)$. In this section, we only consider integers $n$ that are coprime with $2$ and $3$. 
In classical number theory, Korselt's criterion---which is satisfied by a composite number $n$ if $(p - 1) \mid (n - 1)$ for every prime $p$ dividing $n$---can be used to test for Carmichael numbers. In \cite{Korselt}, Silverman introduces an analogous criterion for elliptic curves. 

\begin{definition} \label{korseltDef}
A positive integer $n$ is called  an \emph{elliptic Korselt number of Type I} if it has at least two distinct prime factors, such that for every prime dividing $n$, the following hold:
\begin{enumerate}[label=(\arabic*)]
\item $E$ has good reduction at $p$
\item $p + 1 - a_p \mid n + 1 - a_n$
\item $\ord_p(a_n - 1) \ge \ord_p(n) - \begin{cases}1 & a_p \not \equiv 1 \pmod{p} \\ 0 & a_p \equiv 1 \pmod{p}\end{cases}$.
\end{enumerate}
\end{definition}

Here, $a_p$ is the Frobenius trace of $E(\F_p)$ as usual, and $a_n$ is the $n^{th}$ coefficient of the $L$-series of $E/\Q$; for how to compute this coefficient, see \cite{Z}. In particular, $a_n$ is a multiplicative function when $n$ is square-free, in the sense that if $n = \prod_i p_i$ for distinct $p_i$, then $a_n = \prod_i a_{p_i}$. Finally, $\ord_p(n)$ denotes the highest power of $p$ that appears in the prime factorization of $n$, with $\ord_p(0) = \infty$. In \cite{Korselt} it has been shown that any number satisfying this elliptic Korselt criterion is an elliptic Carmichael number, but the converse need not be true. 

\begin{proposition}
If $n$ is an elliptic Korselt number of Type I for an elliptic curve $E$, then $n$ is an elliptic Carmichael number for $E$.
\begin{proof}
See \cite[Proposition 11]{Korselt}.
\end{proof}
\end{proposition}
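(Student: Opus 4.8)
The plan is to unwind both definitions and reduce everything to a single divisibility between group exponents. Writing $n = \prod_{p \mid n} p^{e_p}$ with $e_p = \ord_p(n)$, the Chinese Remainder Theorem gives a ring isomorphism $\Z/n\Z \cong \prod_{p\mid n} \Z/p^{e_p}\Z$ and hence a group isomorphism $E(\Z/n\Z) \cong \prod_{p \mid n} E(\Z/p^{e_p}\Z)$, where good reduction at each $p\mid n$ (condition~(1)) is what makes the curve behave well in each factor. Thus, setting $m = n + 1 - a_n$, the congruence $mP \equiv O \pmod n$ holds for \emph{every} $P \in E(\Z/n\Z)$ if and only if $m$ annihilates each factor, i.e.\ if and only if $\exp\!\big(E(\Z/p^{e_p}\Z)\big) \mid m$ for every $p \mid n$. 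Since a Type I Korselt number already has at least two distinct prime factors and good reduction everywhere, establishing these divisibilities is exactly what is needed to conclude that $n$ is an elliptic Carmichael number for $E$.

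I would then analyze $G := E(\Z/p^{e}\Z)$ (write $e = e_p$) through the reduction exact sequence $0 \to G_1 \to G \to E(\F_p) \to 0$, where $G_1$ is the kernel of reduction modulo $p$. For good reduction the maps $E(\Z/p^{i+1}\Z) \to E(\Z/p^{i}\Z)$ are surjective with kernel of order $p$, so $\abs{G_1} = p^{e-1}$ and in particular $\exp(G_1) \mid p^{e-1}$. I would split $\exp(G)$ into its prime-to-$p$ part and its $p$-part. Because $G_1$ is a $p$-group, reduction induces an isomorphism on prime-to-$p$ parts, so the prime-to-$p$ part of $\exp(G)$ equals that of $\exp(E(\F_p))$, which divides $\#E(\F_p) = p + 1 - a_p =: N_p$; condition~(2), namely $N_p \mid m$, disposes of the prime-to-$p$ part at one stroke. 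For the $p$-part, the elementary bound $\exp(G) \mid \exp(G_1)\,\exp(E(\F_p))$ gives $\ord_p(\exp G) \le (e-1) + \ord_p(N_p)$.

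The crux is a Hasse-bound computation pinning down $s := \ord_p(N_p)$. Since $N_p \equiv 1 - a_p \pmod p$, we have $p \mid N_p$ exactly when $a_p \equiv 1 \pmod p$. For $p \ge 5$ the Hasse bound $\abs{a_p} \le 2\sqrt p < p$ leaves only finitely many candidate values of $a_p$ congruent to $1$ modulo $p$, and a short check shows that for each the resulting $N_p$ is divisible by $p$ exactly once; hence $s \le 1$, with $s = 1$ precisely when $a_p \equiv 1 \pmod p$. Writing $\epsilon_p$ for the correction term in condition~(3) (so $\epsilon_p = 1$ if $a_p \not\equiv 1 \pmod p$ and $\epsilon_p = 0$ if $a_p \equiv 1 \pmod p$), this says exactly $s = 1 - \epsilon_p$, and therefore $\ord_p(\exp G) \le (e-1) + (1 - \epsilon_p) = e - \epsilon_p$. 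On the other hand $m = n - (a_n - 1)$ with $\ord_p(n) = e$ and, by condition~(3), $\ord_p(a_n - 1) \ge e - \epsilon_p$; consequently $\ord_p(m) \ge \min\{e,\ e - \epsilon_p\} = e - \epsilon_p \ge \ord_p(\exp G)$. Combining this with the prime-to-$p$ divisibility yields $\exp(G) \mid m$, as required, and feeding this back through the Chinese Remainder decomposition completes the argument.

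I expect the main obstacle to be the structural input rather than the bookkeeping: one must justify $\abs{G_1} = p^{e-1}$ and the prime-to-$p$ splitting (standard good-reduction facts about the kernel of reduction and the formal group), and, more delicately, establish the exact value $s = 1 - \epsilon_p$. It is precisely this Hasse-bound fact, valid for $p \ge 5$, that makes the asymmetric $\pm 1$ correction in condition~(3) the \emph{sharp} threshold; the small primes $p \in \{2,3\}$, where $\ord_p(N_p)$ can exceed $1$, would need to be treated separately and are in any case excluded by our running hypotheses. Once $s = 1 - \epsilon_p$ is in hand, the remainder is the exponent arithmetic above together with the Chinese Remainder reduction.
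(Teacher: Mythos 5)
The paper never proves this proposition at all: it is quoted directly from Silverman, with the citation \cite[Proposition 11]{Korselt} built into the statement, so there is no internal proof to compare yours against. The honest comparison is therefore with Silverman's original argument, and your proposal is essentially a correct reconstruction of it. Your reduction via the Chinese Remainder Theorem to showing $\exp\big(E(\Z/p^{e}\Z)\big) \mid n+1-a_n$ for each $p \mid n$ with $e = \ord_p(n)$, the filtration of each factor by the kernel of reduction (a $p$-group of order $p^{e-1}$), the use of condition (2) for the prime-to-$p$ part, and the use of condition (3) together with a Hasse-bound computation for the $p$-part are exactly the ingredients of the standard proof. The crux is also handled correctly: for $p \ge 5$, $a_p \equiv 1 \pmod p$ forces $a_p = 1$ (or $a_p = 1-p$, possible only for $p=5$), so $\ord_p(p+1-a_p) \le 1$ in all cases, which is precisely why the asymmetric $0/1$ correction in condition (3) is enough; your chain $\ord_p(\exp G) \le (e-1) + \ord_p(p+1-a_p) \le e - \epsilon_p \le \ord_p(n+1-a_n)$ is sound, and combining it with the prime-to-$p$ divisibility does give $\exp(G) \mid n+1-a_n$. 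Your caveat about $p \in \{2,3\}$ is genuine rather than cosmetic: for $p=2$, $a_2=-1$ one has $\ord_2(p+1-a_p)=2$, and the exponent bound above is then too weak whenever $4 \mid n$, so the argument as written really does need the restriction. This is harmless here, since the paper's standing convention excludes residue characteristics $2$ and $3$, and you flag it correctly; but note that Silverman's proposition is stated without that restriction, so a fully general proof would require additional care at those small primes.
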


Recall that an anomalous prime for an elliptic curve $E / \mathbb{Q}$ is a prime such that $E$ has good reduction at $p$ and $\#E(\bb{F}_p) = p$ (or equivalently, $a_p = 1$).

\begin{proposition} \label{anomProdKorselt}
Let $E$ be an elliptic curve and let $p_1, p_2, \dots, p_m$ be distinct anomalous primes for $E$. Then $n = \prod_{i = 1}^m p_i$ is an elliptic Korselt number of Type I for $E$.
\end{proposition}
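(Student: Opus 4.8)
The plan is to verify the three defining conditions of a Type I elliptic Korselt number (Definition~\ref{korseltDef}) directly from the hypothesis, the whole argument resting on one observation: an anomalous prime forces the Frobenius trace to equal $1$. Throughout I assume $m \ge 2$, so that $n$ genuinely has the required two distinct prime factors; this is implicit in the hypothesis that the $p_i$ are distinct anomalous primes for $E$.

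First I would record the arithmetic consequence of the definition of an anomalous prime. For each $i$, since $p_i$ is anomalous for $E$ we have good reduction at $p_i$ and $\#E(\F_{p_i}) = p_i$, so the trace is $a_{p_i} = p_i + 1 - \#E(\F_{p_i}) = 1$. This immediately yields condition (1) of Definition~\ref{korseltDef}, good reduction at every prime dividing $n$, and it also pins down $p_i + 1 - a_{p_i} = p_i$ for each $i$.

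Next I would compute $a_n$. Since $n = \prod_{i=1}^m p_i$ is a product of distinct primes it is square-free, so the multiplicativity of the $L$-series coefficients on square-free arguments gives $a_n = \prod_{i=1}^m a_{p_i} = 1$. With $a_{p_i} = 1$ and $a_n = 1$ in hand, the remaining two conditions become routine. For condition (2), I must show $p_i + 1 - a_{p_i} \mid n + 1 - a_n$ for every $i$; the left side equals $p_i$, the right side equals $n + 1 - 1 = n$, and $p_i \mid n$ by construction. For condition (3), note that $a_{p_i} = 1 \equiv 1 \pmod{p_i}$, so we fall into the case where nothing is subtracted and the right-hand side is $\ord_{p_i}(n) = 1$ (as $n$ is square-free); the left-hand side is $\ord_{p_i}(a_n - 1) = \ord_{p_i}(0) = \infty$, so the inequality holds trivially.

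There is no genuine obstacle here, so the only points requiring care are bookkeeping ones: invoking multiplicativity of $a_n$ precisely on the square-free argument $n$ in order to conclude $a_n = 1$, and using the convention $\ord_p(0) = \infty$ so that condition (3) is satisfied vacuously rather than through any nontrivial estimate. Assembling these three verifications, together with the count $m \ge 2$ of distinct prime factors, completes the proof.
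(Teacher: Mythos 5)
Your proof is correct and follows essentially the same route as the paper's: verify good reduction from anomalousness, use $a_{p_i}=1$ and multiplicativity of $a_n$ on the square-free $n$ to reduce condition (2) to $p_i \mid n$, and dispose of condition (3) via $\ord_{p_i}(a_n-1)=\ord_{p_i}(0)=\infty$. Your write-up is merely more explicit than the paper's (which states the same three verifications in three sentences), so there is nothing further to flag.
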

\begin{proof}
The first condition of Definition \ref{korseltDef} is satisfied since elliptic curves have good reduction at anomalous primes. The second condition is satisfied since $a_n = \prod_{i = 1}^m a_{p_i} = 1$, and each $p_i$ divides $n$. The third condition is satisfied because for each $i$, $\ord_{p_i}(a_n - 1) = \ord_{p_i}(0) = \infty$.
\end{proof}

The converse of Proposition \ref{anomProdKorselt} is not true: not all elliptic Korselt numbers of Type I for an elliptic curve $E$ are products of distinct primes which are anomalous for $E$. However, for a product of two distinct primes $n = pq$  there are conditions on $p$ and $q$ under which both $p$ and $q$ must be anomalous. A result of this form was obtained in \cite[Proposition 17]{Korselt}. This proposition states if $n = pq$ is Type I elliptic Korselt for $E$ with $17 < p < \sqrt{q}$, then $a_p=a_q=1$ i.e.  $p$ and $q$ are anomalous for $E$. However, there is a mistake in the proof, resulting in incorrect bounds. A counterexample is included below.

\begin{counterexample} Let $E: y^2 = x^3 + 1$, $p = 53$ and $q = 2971$. We have $a_p = 0$ and $a_q = 56$, and $pq$ an elliptic Korselt number of Type I for $E$. However, $17 < p < \sqrt{q}$ is satisfied. 
\end{counterexample} 

The remainder of this section is devoted to proving a generalization of \cite[Proposition 17]{Korselt} to squarefree Type I elliptic Korselt numbers. In particular, we include conditions on distinct primes $p,q$ so that if $pq$ is Type I elliptic Korselt for a curve $E$, then $p$ and $q$ are anomalous for $E$.

\begin{theorem} \label{gen_silv}
Let $E/\Q$ be an elliptic curve and let $n = p_1 p_2 \dots p_m$ be an elliptic Korselt number of Type I for $E$ such that $5 \le p_1 < p_2 < \dots < p_m$, where $m \ge 2$. Then one of the following conditions is satisfied:
\begin{enumerate}
    \item $p_1\cdot \dots \cdot p_{m-1} \leq 4^m$
    \item $a_{p_m} = 1$, and for $1 \le i \le m - 1$, $a_{p_i} = -1$ for an even number of values of $i$ and the remaining traces are equal to $1$
    \item $p_1\cdot \dots \cdot p_{m-1} \geq \frac{\sqrt{p_m}}{4^m}$.
\end{enumerate}
\end{theorem}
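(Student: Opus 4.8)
The plan is to establish the trichotomy by showing that if conditions (1) and (3) both fail, then condition (2) must hold. Throughout, write $P = p_1 \cdots p_{m-1}$ and $A = \prod_{i=1}^{m-1} a_{p_i}$, so that $n = P\,p_m$ and, since $n$ is squarefree and $a_n$ is multiplicative, $a_n = A\,a_{p_m}$. Set $b_i := p_i + 1 - a_{p_i} = \#E(\F_{p_i})$; by Hasse each $a_{p_i}$ is an integer with $|a_{p_i}| \le 2\sqrt{p_i}$, so $b_i \in [(\sqrt{p_i}-1)^2,\ (\sqrt{p_i}+1)^2]$ and $|A| \le 2^{m-1}\sqrt{P}$. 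The negations of (1) and (3) read $P > 4^m$ and $P < \sqrt{p_m}/4^m$; together these force $\sqrt{p_m} > 16^m$, so $p_m$ is enormous relative to $P$.

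First I would exploit the Korselt divisibility (condition~(2) of Definition~\ref{korseltDef}) for the largest prime, namely $b_m \mid n + 1 - a_n$. Setting $R := (n + 1 - a_n) - P\,b_m$ and reducing directly, one finds $R = (P - A)a_{p_m} - (P - 1)$, so that $b_m \mid R$. The heart of the argument is a size estimate. Using $|A| \le 2^{m-1}\sqrt{P}$ together with $P > 4^m$ (which gives $2^{m-1}\sqrt{P} < P$, hence $|P - A| < 2P$), one bounds $|R| < 4P\sqrt{p_m} + P$; then $P < \sqrt{p_m}/4^m$ reduces this to roughly $p_m/4^{m-1} + P$, which for $m \ge 2$ lies well below $b_m \ge (\sqrt{p_m} - 1)^2$ once $p_m > 256^m$. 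Since $b_m \mid R$ and $|R| < b_m$, we conclude $R = 0$, i.e.
\[
(P - A)\,a_{p_m} = P - 1.
\]

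Next I would read off $a_{p_m}$ and $A$ from this relation. Because $P \ge 5^{m-1} > 1$, we have $a_{p_m} \ne 0$. If instead $|a_{p_m}| \ge 2$ or $a_{p_m} = -1$, then solving for $A$ gives $|A| > P/2$, and since $P > 4^m$ implies $P/2 > 2^{m-1}\sqrt{P}$, this contradicts the Hasse bound on $|A|$. Hence $a_{p_m} = 1$, and the relation collapses to $A = 1$. Finally, $A = \prod_{i=1}^{m-1} a_{p_i} = 1$ is a product of integers equal to $1$, which forces every $a_{p_i} = \pm 1$ with an even number of them equal to $-1$; this is precisely condition~(2).

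I expect the main obstacle to be the size estimate in the second step: the constant $4^m$ must be calibrated so that the remainder $R$ stays strictly below $b_m$ across the entire window $4^m < P < \sqrt{p_m}/4^m$, and simultaneously so that the Hasse bound $|A| \le 2^{m-1}\sqrt{P}$ cleanly eliminates the cases $|a_{p_m}| \ge 2$ and $a_{p_m} = -1$. Everything else---the modular reduction defining $R$, and the closing observation that a product of integers equalling $1$ consists of units---is routine once these inequalities are pinned down.
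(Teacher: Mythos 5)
Your proof is correct, and although it opens exactly as the paper does---applying the Korselt divisibility $p_m + 1 - a_{p_m} \mid n + 1 - a_n$, subtracting $P\,(p_m + 1 - a_{p_m})$, and observing that $p_m + 1 - a_{p_m}$ divides the same remainder $R = (P-A)a_{p_m} - (P-1)$---the logical organization from that point on is genuinely different, and simpler. The paper negates only condition (1) and then splits on whether $R = 0$: when $R = 0$ it derives condition (2) by enumerating the possible values of $(r a_{p_m} - 1)/(a_{p_m} - 1)$, and when $R \neq 0$ it derives condition (3) from $p_m + 1 - a_{p_m} \le \abs{R}$, which forces it through a quadratic inequality in $\sqrt{n/p_m}$ and a delicate auxiliary claim (an inequality in $p_m^{3/2}, p_m^{5/4}, \dots$) verified by computer algebra for $m = 2, 3$ and by a monotonicity argument for $m > 3$. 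You instead negate (1) and (3) simultaneously, which forces $\sqrt{p_m} > 16^m$ and hence $p_m > 256^m$; under that regime your crude estimate $\abs{R} < 4P\sqrt{p_m} + P < p_m/4^{m-1} + \sqrt{p_m}/4^m$ already sits strictly below $(\sqrt{p_m}-1)^2 \le p_m + 1 - a_{p_m}$, so $R = 0$ comes for free and the entire computational core of the paper's Case 2 evaporates. Your endgame is also cleaner than (though parallel to) the paper's Case 1: rather than enumerating values of a rational expression, you eliminate $a_{p_m} = 0$, $a_{p_m} = -1$, and $\abs{a_{p_m}} \ge 2$ by playing $(P - A)a_{p_m} = P - 1$ against the Hasse bound $\abs{A} \le 2^{m-1}\sqrt{P} < P/2$ (valid since $P > 4^m$), and then $A = 1$ forces each $a_{p_i}$ to be a unit with an even number of $-1$'s. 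What your route buys is the removal of all computer-assisted verification and quadratic-root manipulation, with the same constants $4^m$ in the conclusion; what the paper's route buys is marginally finer information, since its two cases are each established under the weaker hypothesis that only (1) fails (e.g., it shows that $R \neq 0$ together with $\neg(1)$ already implies (3), independently of whether (2) could hold).
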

\begin{proof}

Assume $p_1\cdots p_{m-1} = \frac{n}{p_m} > 4^m$. We show that one of the two remaining conditions of the theorem are satisfied. We have
\begin{align*}
    n+1-a_n &= p_1\cdots p_{m} + 1 - a_{p_1}\cdots a_{p_m} \\ 
    &= \frac{n}{p_m}(p_m+1-a_{p_m})-\frac{n}{p_m}+a_{p_m}\frac{n}{p_m}+1-a_n.
\end{align*}
By the divisibility criterion of Type I elliptic Korselt numbers, we have $p_m + 1 - a_{p_m} \mid n + 1 - a_n$, so
\begin{align*}
    \left(p_m+1-a_{p_m}\right) \mid \left(-\frac{n}{p_m}+a_{p_m}\frac{n}{p_m}+1-a_n\right).
\end{align*}
We now consider two cases: $-\frac{n}{p_m}+a_{p_m}\frac{n}{p_m}+1-a_n = 0$ and $-\frac{n}{p_m}+a_{p_m}\frac{n}{p_m}+1-a_n \neq 0$.
\begin{description}
\item[Case 1] $-\frac{n}{p_m}+a_{p_m}\frac{n}{p_m}+1-a_n = 0$.
\end{description}
In this case, we have
\begin{equation} \label{a_n-1}
\frac{n}{p_m}(a_{p_m} - 1) = a_n - 1.
\end{equation}
Suppose for sake of contradiction that $a_{p_m} \neq 1$. We will show that this leads to $\frac{n}{p_m} \le 4^m$. We have
\begin{align*}
\frac{n}{p_m} = \frac{a_n - 1}{a_{p_m} - 1} &= \frac{(a_{p_1}\cdots a_{p_{m - 1}}) a_{p_m} - 1}{a_{p_m} - 1}.
\end{align*}

For simplicity of notation, let $r$ denote $a_{p_1}\cdots a_{p_{m - 1}}$. Since $a_{p_m} \neq 1$ is an integer, the possible values of $\frac{n}{p_m}$ in terms of $r$ are
\[\dots, \frac{2r + 1}{3}, \frac{r + 1}{2}, 1, 2r - 1, \frac{3r - 1}{2}, \frac{4r - 1}{3}, \dots,\]
where $a_{p_m} \in \Z-\{1\}$, respectively. If $r < 0$ then the maximum of these values is $1$, so the desired inequality is clear. Assume instead that $r$ is positive. Then $\frac{n}{p_m}$ is maximized when $a_{p_m} = 2$, in which case $\frac{n}{p_m} = 2r - 1$. Now by Hasse's theorem, $r \le 2^{m - 1}\sqrt{\frac{n}{p_m}}$, and so
\[\frac{n}{p_m} \le 2 \cdot 2^{m - 1}\sqrt{\frac{n}{p_m}} - 1 < 2^m\sqrt{\frac{n}{p_m}},\]
so $\frac{n}{p_m} \le 4^m$, as desired. However, by assumption, $\frac{n}{p_m} > 4^m$. 
Thus, we have a contradiction: if $a_{p_m}-p_1\cdots p_{m-1} -a_{p_1}\cdots a_{p_m}+1 = 0$, then $a_{p_m}$ must be $1$. We can say more: if $a_{p_m} = 1$, then by (\ref{a_n-1}), $a_{p_1}\cdots a_{p_{m-1}} =1$. Thus, an even number of traces $a_{p_i}$ for $1 \leq i \leq m-1$ must be equal to $-1$, while the rest of these traces must be equal to 1.

\begin{description}
\item[Case 2] $-\frac{n}{p_m}+a_{p_m}\frac{n}{p_m}+1-a_n \neq 0$.
\end{description}
Since $p_m+1-a_{p_m} \mid -\frac{n}{p_m}+a_{p_m}\frac{n}{p_m}+1-a_n$, we have
\begin{align*}
\abs{p_m+1-a_{p_m}} &\leq \abs{\frac{n}{p_m}a_{p_m}-\frac{n}{p_m}-a_{p_1}\dots a_{p_m}+1} \\
p_m+1-2\sqrt{p_m} \leq \abs{p_m+1-a_{p_m}} &\leq  2\frac{n}{p_m}\sqrt{p_m}+\frac{n}{p_m}+2^m\sqrt{p_m}\sqrt{\frac{n}{p_m}}-1.
\end{align*}
Subtracting the left-most quantity from the right-most gives:
\begin{align*}
(2\sqrt{p_m}+1)\frac{n}{p_m} + 2^m\sqrt{p_m}\sqrt{\frac{n}{p_m}} - p_m-2+2\sqrt{p_m} \geq 0.
\end{align*}
Solving the quadratic equation for $\sqrt{\frac{n}{p_m}}$ yields:
\begin{equation} \label{untoned_quads}
\sqrt{\frac{n}{p_m}} \geq \frac{-2^m\sqrt{p_m}+\sqrt{4^{m}p_m - 4(2\sqrt{p_m} + 1)(-p_m-2+2\sqrt{p_m})}}{2(2\sqrt{p_m}+1)}.
\end{equation}
Using the following claim, we will show the right-hand side of (\ref{untoned_quads}) is at least $\frac{1}{2^m}p_m^{1/4}$.
\begin{claim*} \label{uglyEq}
\begin{equation} \label{uglyIneq}
\left( 8 - \frac{16}{4^m} \right)p_m^{3/2} - 8p_m^{5/4} - \left( 12 + \frac{16}{4^m} \right)p_m - 4p_m^{3/4} + \left( 8 - \frac{4}{4^m} \right)p_m^{1/2} + 8 \ge 0.
\end{equation}
\end{claim*}

For $m = 2$ it can be verified with a computer algebra system that (\ref{uglyIneq}) is true when $p_2 \ge 19$. By assumption, $p_1 > 16$, so this is always the case. For $m = 3$ it can be verified with a computer algebra system that (\ref{uglyIneq}) is true when $p_3 \ge 13$. Note that $p_3 > 11$ because otherwise we have $p_1p_2 \le 5 \cdot 7 = 35$, contradicting the initial assumption. Thus, the claim holds for $m = 3$.

Now, let $f(m, p_m)$ be the left-hand side of (\ref{uglyIneq}).
Observe that if $p_m > 0$ is held constant and $m$ is increased, then $f(m, p_m)$ increases. This is because we may write
\[f(m, p_m) = g(p_m) - \frac{16}{4^m}p_m^{3/2} - \frac{16}{4^m}p_m - \frac{4}{4^m}p_m^{1/2},\]
where
\[g(p_m) = 8p_m^{3/2} - 8p_m^{5/4} - 12p_m - 4p_m^{3/4} + 8p^{1/2} + 8.\]
and as $m$ increases, $-\frac{16}{4^m}$ and $-\frac{4}{4^m}$ increase. Thus, since $f(3, p_m) \ge 0$ for $p_m \ge 13$, $f(m, p_m) \ge 0$ for $p_m \ge 13$ for all $m > 3$. Since $13$ is the fourth prime greater than or equal to $5$, $p_m \ge 13$ for all $m > 3$. This completes the proof of the claim.

Thus, we have 
\[\left( 8 - \frac{16}{4^m} \right)p_m^{3/2} - 8p_m^{5/4} - \left( 12 + \frac{16}{4^m} \right)p_m - 4p_m^{3/4} + \left( 8 - \frac{4}{4^m} \right)p_m^{1/2} + 8 \ge 0,\]
which implies 
\begin{align*}
8p_m^{3/2} - 12p_m + 8p_m^{1/2} + 8 &\ge \frac{16}{4^m}p_m^{3/2} + \frac{16}{4^m}p_m + \frac{4}{4^m}p_m^{1/2} + 8p_m^{5/4} + 4p_m^{3/4}\\
4^mp_m + 8p_m\sqrt{p_m} - 12p_m + 8\sqrt{p_m} + 8 &\ge 4^mp_m + \frac{4}{4^m}(2\sqrt{p_m} + 1)^2\sqrt{p_m} + 4(2\sqrt{p_m} + 1)p_m^{3/4}\\ 
4^mp_m + 8p_m\sqrt{p_m} - 12p_m + 8\sqrt{p_m} + 8 &\ge \left( \frac{2}{2^m}(2\sqrt{p_m} + 1)p_m^{1/4} + 2^m\sqrt{p_m} \right)^2.
\end{align*}
The right-hand side above is positive and smaller than the left-hand side, so the left-hand side is also positive. We take the square root of both sides.
\begin{align*}
\sqrt{4^mp_m + 8p_m\sqrt{p_m} - 12p_m + 8\sqrt{p_m} + 8} &\ge \frac{2}{2^m}(2\sqrt{p_m} + 1)p^{1/4} + 2^m\sqrt{p_m}\\
\sqrt{4^mp_m - 4(2\sqrt{p_m} + 1)(-p_m - 2 + 2\sqrt{p_m})} &\ge \frac{2}{2^m}(2\sqrt{p_m} + 1)p^{1/4} + 2^m\sqrt{p_m}\\
\frac{-2^m\sqrt{p_m} + \sqrt{4^mp_m - 4(2\sqrt{p_m} + 1)(-p_m - 2 + 2\sqrt{p_m})}}{2(2\sqrt{p_m} + 1)} &\ge \frac{1}{2^m}p_m^{1/4}.
\end{align*}
Thus, $\frac{n}{p_m} = p_1\cdots p_{m-1} \geq \frac{\sqrt{p_m}}{4^m}$, concluding the proof of Theorem~\ref{gen_silv}.
\end{proof}

Note that for $m \ge 4$ the inequality $p_1\cdots p_{m-1} \le 4^m$, i.e. the first condition of Theorem~\ref{gen_silv}, is never satisfied.

\begin{remark}
Theorem~\ref{gen_silv} can be restated as follows. Let $E$ be an elliptic curve and let $n = p_1 p_2 \cdots p_m$ be an elliptic Korselt number of Type I for $E$ such that $5 \le p_1 < p_2 < \dots < p_m$, for $m \ge 2$. If $4^m < p_1 \cdots p_{m - 1} < \frac{\sqrt{p_m}}{4^m}$, then $a_{p_m} = 1$ and for $1 \le i \le m - 1$, $a_{p_i} = -1$ for an even number of values of $i$ and $a_{p_i} = 1$ for the remaining values. 
\end{remark}

The following corollary of Theorem~\ref{gen_silv} corrects the claim of Proposition 17 in \cite{Korselt}.
\begin{corollary} \label{fix_silv}
Let $E$ be an elliptic curve and let $n = pq$ be an elliptic Korselt number of Type I for $E$ such that $p < q$. Then one of the following conditions holds:
    \begin{itemize}
        \item $p \leq 13$
        \item $p$ and $q$ are anomalous for $E$.
        \item $p \geq \frac{\sqrt{q}}{16}$
    \end{itemize}
\end{corollary}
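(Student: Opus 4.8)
The plan is to derive Corollary~\ref{fix_silv} directly from Theorem~\ref{gen_silv} by specializing to $m = 2$, and then translating the three resulting conditions into the stated form. Setting $m=2$, $p_1 = p$, and $p_2 = q$ in Theorem~\ref{gen_silv}, the three alternatives become (1) $p_1 \le 4^2 = 16$, (2) $a_q = 1$ and $a_p = 1$ (since for $m=2$ the only index $i$ with $1 \le i \le m-1$ is $i=1$, and ``an even number of the traces $a_{p_i}$ equal $-1$'' forces that number to be $0$, so $a_p = 1$), and (3) $p = p_1 \ge \frac{\sqrt{q}}{4^2} = \frac{\sqrt{q}}{16}$. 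I would begin by writing out this specialization explicitly.

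The main work is reconciling the numerical thresholds: Theorem~\ref{gen_silv} gives $p_1 \le 16$ in condition (1), whereas the corollary states $p \le 13$. Since $p$ is prime (it is one of the prime factors $p_i$) and $13$ is the largest prime not exceeding $16$, the inequality $p \le 16$ is equivalent for primes to $p \le 13$. I would state this observation, noting that there is no prime strictly between $13$ and $17$, so $p \le 16 \iff p \le 13$ for the prime $p$. Thus condition (1) of the theorem becomes exactly the first bullet of the corollary.

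Next I would identify condition (2) with the second bullet. When $a_p = a_q = 1$, Theorem~\ref{sixOrder} and the definition of the trace give $\#E(\F_p) = p + 1 - a_p = p$ and $\#E(\F_q) = q + 1 - a_q = q$, which is precisely the statement that $p$ and $q$ are anomalous primes for $E$. Finally, condition (3), $p \ge \frac{\sqrt{q}}{16}$, is verbatim the third bullet. Assembling these three equivalences shows that the disjunction of the theorem's three conditions is exactly the disjunction of the corollary's three bullets, completing the proof.

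I do not anticipate a serious obstacle here, since the corollary is a direct specialization; the only point requiring care is the reindexing in condition (2)—checking that for $m=2$ the phrase ``an even number of values of $i$ with $a_{p_i} = -1$'' correctly collapses to $a_p = 1$ rather than admitting $a_p = -1$—and the elementary remark that the prime $p$ satisfies $p \le 16$ if and only if $p \le 13$. Both are quick to verify and I would simply state them rather than belabor them.
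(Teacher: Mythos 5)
Your proof is correct and is exactly the paper's intended argument: the paper offers no separate proof of Corollary~\ref{fix_silv}, presenting it as a direct consequence of Theorem~\ref{gen_silv} obtained by specializing to $m=2$ just as you do, with the threshold $p \le 16$ sharpened to $p \le 13$ because $14$, $15$, $16$ are composite, and with the even-number-of-$(-1)$'s condition collapsing to $a_p = a_q = 1$, i.e.\ both primes anomalous. The only point worth one extra line is that Theorem~\ref{gen_silv} carries the hypothesis $5 \le p_1$ while the corollary does not; when $p \in \{2,3\}$ the theorem cannot be invoked, but then $p \le 13$ holds trivially, so the first bullet absorbs that case.
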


\section{Elliptic Korselt Numbers of Type I of the Form $pq$}\label{KorseltAnom}

In Proposition~\ref{anomProdKorselt}, we showed that any product of distinct anomalous primes of an elliptic curve is an elliptic Korselt number of Type I. Corollary \ref{fix_silv} gives sufficient conditions for when an elliptic Korselt number of Type I of the form $n = pq$ is a product of anomalous primes. In this section, we show that the probability that an elliptic Korselt number of Type I of the form $n=pq$ is a product of anomalous primes goes to $1$ as $n \to \infty$.

Part of our proof relies on the following proposition which is proven in the preprint \cite[Corollary 6.18]{BH-EK}. This particular statement appeared as a conjecture in an early draft of this paper which included numerical evidence to support the conjecture. We have relegated that numerical evidence to an appendix.

\begin{proposition} \label{odc}
    For $N \ge 7$, let $5 \le p, q \le N$ be distinct primes chosen uniformly, and let $n = pq$. Let $E(\Z/n\Z)$ be an elliptic curve chosen uniformly from the set of elliptic curves defined over $\Z/n\Z$ with good reduction over $\F_p$ and $\F_q$ such that $\# E(\F_p)$ and $\# E(\F_q)$ divide $n + 1 - a_n$.
    Then
    \[\lim \limits_{N \to \infty} \Pr[\# E(\Z/n\Z) = n + 1 - a_n] = 1.\]
\end{proposition}

Note that $\# E(\Z/n\Z) = (p + 1 - a_p)(q + 1 - a_q)$ and $n + 1 - a_n = pq + 1 - a_p a_q$. We have the following heuristic justification for the conjecture. Note that by Hasse's bound, $p + 1 - a_p$ and $q + 1 - a_q$ are close in value to $p$ and $q$, respectively, and $pq + 1 - a_p a_q$ is close in value to $pq$. Thus, if $p + 1 - a_p$ and $q + 1 - a_q$ divide $n + 1 - a_n$ and their product is not equal to $n + 1 - a_n$, then $p + 1 - a_p$ and $q + 1 - a_q$ must share many factors; this should happen rarely.

From the conditions listed in Proposition~\ref{odc}, it is clear that $n$ satisfies the first two conditions of the elliptic Korselt of Type I criterion for $E$. In other words, $n$ is ``nearly" elliptic Korselt number of Type I. The lemma below states that when $p, q \geq 7$, the third elliptic Korselt condition is a redundancy given the first and second. We will need this and the following results to prove the Theorem \ref{prob_thm} of this section.

\begin{lemma} \label{case1}
    For $N \ge 7$, let $5 \le p, q \le N$ be uniformly chosen distinct primes, and let $n = pq$. Let $E(\Z/n\Z)$ be an elliptic curve chosen uniformly among those for which $n$ is an elliptic Korselt number of Type I. Then
    \[\lim \limits_{N \to \infty} \Pr[\text{$p$ is anomalous for $E$ and $q$ is not}] = 0.\]
\end{lemma}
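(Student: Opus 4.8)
The plan is to prove Lemma~\ref{case1} by contradiction-free asymptotic estimation: I want to show that the event ``$p$ is anomalous for $E$ (so $a_p = 1$) but $q$ is not anomalous'' forces a rigid arithmetic relationship between $p$, $q$, and $a_q$ that, among all admissible curves making $n = pq$ a Type I elliptic Korselt number, occupies a vanishing fraction as $N \to \infty$. First I would fix the event in question and set $a_p = 1$, so that $p + 1 - a_p = p$ and, by multiplicativity of $a_n$ on square-free $n$, $a_n = a_p a_q = a_q$. The divisibility condition (2) of Definition~\ref{korseltDef} then reads $p \mid n + 1 - a_n = pq + 1 - a_q$ and $q + 1 - a_q \mid pq + 1 - a_q$. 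The first of these simplifies: $pq + 1 - a_q \equiv 1 - a_q \pmod p$, so $p \mid a_q - 1$.

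Next I would exploit the two divisibility constraints simultaneously. From $p \mid a_q - 1$ and the Hasse bound $|a_q| \le 2\sqrt q \le 2\sqrt N$, the nonzero integer $a_q - 1$ (nonzero precisely because $q$ is not anomalous, i.e. $a_q \neq 1$) is a multiple of $p$ whose absolute value is at most $2\sqrt N + 1$. This already pins $a_q$ to the short list $a_q \in \{1 + kp : |k| \le (2\sqrt N + 1)/p\}$, and for each such value the second divisibility $q + 1 - a_q \mid pq + 1 - a_q$ must also hold. The idea is that, for the randomly chosen curve $E$, the trace $a_q$ is (heuristically, and by the standard heuristics/Sato--Tate-type equidistribution implicit in the paper's probabilistic framing) roughly uniform over the integers in the Hasse interval of length $4\sqrt q$, so the probability that $a_q$ lands in a prescribed arithmetic progression modulo $p$ \emph{and} additionally satisfies the second nontrivial divisibility is controlled by the density of admissible residues, which decays.

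The key computation is to bound the number of pairs $(q, a_q)$ with $q \le N$ and $a_q \neq 1$ satisfying both $p \mid a_q - 1$ and $q + 1 - a_q \mid pq + 1 - a_q$, and to compare it against the number of pairs for which $n = pq$ is Type I elliptic Korselt at all. Rewriting $pq + 1 - a_q = p(q + 1 - a_q) - (p - 1)(1 - a_q)$ turns the second condition into $q + 1 - a_q \mid (p-1)(a_q - 1)$; since $a_q \neq 1$ the right side is a nonzero integer of size $O(p\sqrt N)$ while $q + 1 - a_q$ has size $\asymp q$, forcing $q + 1 - a_q \le (p-1)|a_q - 1| = O(p\sqrt N)$, i.e. $q = O(p\sqrt N)$ up to the trace correction — a strong restriction pushing $q$ to be small relative to the product $p \cdot \sqrt N$. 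Combining this with $p \mid a_q - 1$ and summing over the few admissible $k$ shows the favorable event is confined to a set of $(p,q)$-pairs of density $o(1)$.

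The hard part will be making the probabilistic step rigorous rather than merely heuristic: the lemma is stated for a ``randomly chosen elliptic curve,'' so I must be careful about what distribution is meant and ensure that the count of curves $E \bmod n$ realizing a given pair $(a_p, a_q) = (1, a_q)$ is (nearly) uniform in $a_q$ across the Hasse interval, or at least bounded above and below by comparable quantities — this is where one invokes the equidistribution of traces (effectively the same input used throughout the paper's probabilistic section, cf. Conjecture~\ref{odc}). The subtlety is that the conditioning event (``$n$ is Type I Korselt,'' requiring condition (3) as well) restricts the sample space, so I would first use the companion observation that for $p, q \ge 7$ condition~(3) is redundant given (1) and (2), thereby identifying the sample space with pairs satisfying only the divisibility constraints and letting the density estimate above directly yield $\Pr \to 0$.
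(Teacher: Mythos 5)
Your opening deduction is exactly the paper's: with $a_p = 1$, multiplicativity gives $a_n = a_q$, the Korselt divisibility condition gives $p \mid pq + 1 - a_q$, hence $p \mid a_q - 1$, and $a_q - 1 \neq 0$ precisely because $q$ is not anomalous. But at that point you miss the one-line conclusion that makes the lemma elementary and unconditional: a \emph{nonzero} multiple of $p$ of absolute value at most $2\sqrt{q} + 1 \le 2\sqrt{N} + 1$ forces $p \le 2\sqrt{N} + 1$. Since $p$ and $q$ are drawn first and the curve is chosen only afterwards (and a valid curve always exists, by Deuring's theorem and CRT, so the conditioning does not distort the marginal law of $p$), the event ``$p$ anomalous, $q$ not'' is contained in the event $\{p \le 2\sqrt{N}+1\}$, whose probability over a uniform prime $p \le N$ is roughly $\pi(2\sqrt{N}+1)/\pi(N) = O(N^{-1/2})$ and tends to $0$. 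That is the entire proof in the paper; no information about how $E$ or $a_q$ is distributed is ever needed.

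Instead, you pivot to a density argument over the random curve: you require $a_q$ to be (nearly) equidistributed over the Hasse interval among admissible curves, conditioned on the Korselt property, and you yourself flag that this step is heuristic. This is a genuine gap, not a stylistic difference. No such equidistribution input is available: Conjecture~\ref{odc} asserts something different and is \emph{not} assumed in Lemma~\ref{case1}, which is unconditional in the paper (only Lemma~\ref{case2} invokes the conjecture); and proving trace equidistribution under the Korselt conditioning would be far harder than the lemma itself. Your auxiliary computation $q + 1 - a_q \mid (p-1)(a_q - 1)$, giving $q = O(p\sqrt{N})$, is correct algebra but unnecessary, as is the appeal to Proposition~\ref{korseltEquiv} (the paper's proof uses only condition (2)). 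The repair is simply to delete the probabilistic machinery and observe that the short list you construct, $a_q \in \{1 + kp : k \neq 0\}$ intersected with the Hasse interval, is nonempty only when $p \le 2\sqrt{N}+1$, a constraint on $p$ alone whose probability vanishes regardless of the curve distribution.
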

\begin{proof}
Let $N$, $p$, $q$, and $E$ be as in the statement. Assume that $a_p = 1$ and $a_q \neq 1$. By the Korselt divisibility condition, we have that $p$ and $q + 1 - a_q$ divide $pq + 1 - a_q$. Since $p \mid pq + 1 - a_q$, we have $p \mid 1 - a_q$. Since $1 - a_q \neq 0$,
\[p \le \abs{1 - a_q} \le \abs{a_q} + 1 \le 2\sqrt{q} + 1 \le 2\sqrt{N} + 1.\]
The probability that a randomly chosen prime below $N$ is at most $2\sqrt{N} + 1$ goes to zero as $N\rightarrow \infty$. Since $p \le 2\sqrt{N} + 1$ is a necessary condition for $a_p = 1$ and $a_q \neq 1$ for E, it follows that the desired probability approaches zero.
\end{proof}

\begin{proposition} \label{korseltEquiv}
If $n = pq$ for distinct primes $p, q \ge 7$ and $E$ is an elliptic curve with good reduction over $\F_p$ and $\F_q$, then $n$ is an elliptic Korselt number of Type I for $E$ if and only if $p + 1 - a_p$ and $q + 1 - a_q$ divide $n + 1 - a_n$.
\end{proposition}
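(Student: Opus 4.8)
The plan is to observe that the "only if" direction is immediate, so that all the work lies in the "if" direction, which amounts to showing that the third Korselt condition is redundant once conditions (1) and (2) of Definition~\ref{korseltDef} are in place and $p, q \ge 7$. I would first dispose of the "only if" direction in a single line: if $n$ is Type I elliptic Korselt for $E$, then condition (2) applied to each of the two prime divisors $p$ and $q$ is exactly the divisibility $p + 1 - a_p \mid n + 1 - a_n$ together with $q + 1 - a_q \mid n + 1 - a_n$.

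For the "if" direction I would assume both divisibilities and verify the three defining conditions in turn. Condition (1) is the hypothesis of good reduction, and condition (2) is precisely the assumed divisibility, so the substance is condition (3), which I would check at $p$ (the case of $q$ being symmetric). Since $n = pq$ is square-free, $\ord_p(n) = 1$, so condition (3) reads
\[\ord_p(a_n - 1) \ge 1 - \begin{cases}1 & a_p \not\equiv 1 \pmod p\\ 0 & a_p \equiv 1 \pmod p.\end{cases}\]
When $a_p \not\equiv 1 \pmod p$ the right-hand side is $0$ and the inequality holds trivially, since $\ord_p$ is nonnegative on nonzero integers and infinite on $0$. Thus the only case requiring argument is $a_p \equiv 1 \pmod p$, where I must produce $p \mid a_n - 1$.

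The key step is a Hasse-interval argument that pins down $a_p$. By Hasse's theorem $\abs{a_p} \le 2\sqrt p$, whereas any integer congruent to $1$ modulo $p$ other than $1$ itself has absolute value at least $p - 1$. For $p \ge 7$ one has $p - 1 > 2\sqrt p$, so $a_p \equiv 1 \pmod p$ forces $a_p = 1$; that is, $p$ is anomalous and $p + 1 - a_p = p$. Then condition (2) for $p$ gives $p \mid n + 1 - a_n$, and since $p \mid n$ this yields $p \mid a_n - 1$, i.e. $\ord_p(a_n - 1) \ge 1$, as required. The identical argument with $q \ge 7$ settles condition (3) at $q$, completing the verification and hence the proposition.

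I expect the only delicate point to be the bound $p - 1 > 2\sqrt p$, which is exactly where the hypothesis $p, q \ge 7$ enters: at $p = 5$ one has $2\sqrt 5 > 4 = p - 1$, so $a_p = -4 \equiv 1 \pmod 5$ would be admissible and the reduction to $a_p = 1$ would fail. Everything else is routine bookkeeping with the square-free multiplicativity $a_n = a_p a_q$ and the definition of $\ord_p$.
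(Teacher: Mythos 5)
Your proposal is correct and follows essentially the same route as the paper's proof: both reduce the problem to the third Korselt condition, handle the case $a_p \not\equiv 1 \pmod{p}$ trivially via $\ord_p(n) = 1$, and in the case $a_p \equiv 1 \pmod{p}$ use Hasse's bound together with $p \ge 7$ to force $a_p = 1$, from which the divisibility $p \mid a_n - 1$ follows. Your observation about why $p = 5$ fails (since $2\sqrt{5} > 4$ allows $a_p = -4 \equiv 1 \pmod 5$) is a nice explicit justification of the hypothesis that the paper leaves implicit.
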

\begin{proof}
By the definition of Type I elliptic Korselt, the ``only if" direction holds. Suppose that $E$ has good reduction at $p$ and $q$ that $p + 1 - a_p$ and $q + 1 - a_q$ divide $n + 1 - a_n$. 
Since $n=pq$, $\ord_p(n) = 1$, so $a_p \not \equiv 1 \pmod{p}$ implies the third condition of the elliptic Korselt criterion is satisfied for $p$. 

Alternatively, if $a_p \equiv 1 \pmod{p}$, then by Hasse's theorem, $p$ being at least $7$ implies that $a_p = 1$. Thus, $p+1-a_p = p$ and $n + 1 - a_n = pq + 1 - a_q$, and so $p \mid 1 - a_q$. Thus,
\[\ord_p(a_n - 1) = \ord_p(a_q - 1) \ge 1 = \ord_p(n),\]
and so $p$ satisfies the third condition of the elliptic Korselt criterion. By analogy, $q$ satisfies the third condition as well, and so we are done.
\end{proof}

\begin{lemma} \label{case2}
For $N \ge 7$, let $5 \le p, q \le N$ be uniformly chosen distinct primes, and let $n = pq$. Let $E(\Z/n\Z)$ be an elliptic curve chosen uniformly among those for which $n$ is an elliptic Korselt number of Type I. Then
    \begin{align*}
        \lim \limits_{N \to \infty} \Pr[&p \text{ and } q \text{ are not anomalous for } E \text{ and } (p + 1 - a_p)(q + 1 - a_q) \neq n + 1 - a_n] = 0.
    \end{align*}
\end{lemma}
\begin{proof}
Let $N$, $p$, $q$, and $E$ be as in the lemma statement. By Proposition~\ref{korseltEquiv}, this is equivalent to the statement that $p$, $q$ and $E$ are selected in such a way that $E(\Z/nZ)$ has good reduction over $\F_p$ and $\F_q$, and $\# E(\F_p)$ and $\# E(\F_q)$ divide $n + 1 - a_n$.\footnote{If $p = 5$ or $q = 5$, this does not follow from Proposition~\ref{korseltEquiv}, but the probability of this happening approaches zero as $N\rightarrow \infty$, so we can ignore this case.} By Proposition~\ref{odc}, the probability that
\[\# E(\Z/nZ) = \#E(\F_p) \#E(\F_q) = (p + 1 - a_p)(q + 1 - a_q) \neq n + 1 - a_n\]
approaches zero as $N\rightarrow \infty$. Thus, the probability that this condition is satisfied \emph{and} $p$ and $q$ are not anomalous for $E$ also approaches zero, as desired.
\end{proof}

\begin{proposition} \label{mdphi}
    Let $n$ be a positive integer and let $S$ be a finite multiset of factors of $n$. For each $d \mid n$, let $m_d(S)$ be the number of multiples of $d$ in $S$. Then
    \[\sum \limits_{k \in S} k = \sum \limits_{d \mid n} m_d(S) \phi(d).\]
\end{proposition}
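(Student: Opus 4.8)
The plan is to reduce everything to the classical Gauss identity for Euler's totient function, namely that $\sum_{d \mid k} \phi(d) = k$ for every positive integer $k$, and then to interchange the order of summation. I would begin by recalling (or citing) this identity, since it is exactly the bridge between the left-hand side, which sums the elements $k$ themselves, and the right-hand side, which sums values of $\phi$ over divisors.

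First I would rewrite the left-hand side by substituting the Gauss identity into each term. Because every $k \in S$ is a factor of $n$, we have
\[
\sum_{k \in S} k = \sum_{k \in S} \sum_{d \mid k} \phi(d).
\]
The key point to note here is that whenever $d \mid k$ and $k \mid n$, we automatically have $d \mid n$; hence every divisor $d$ that appears as an inner summation index is itself a divisor of $n$. This is what lets me collect the double sum over the full set of divisors of $n$ in the next step.

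Next I would swap the order of summation, grouping by the divisor $d$ rather than by the element $k$. For a fixed $d \mid n$, the term $\phi(d)$ is contributed exactly once for each $k \in S$ satisfying $d \mid k$, counted with multiplicity since $S$ is a multiset; by definition this count is precisely $m_d(S)$. Therefore
\[
\sum_{k \in S} \sum_{d \mid k} \phi(d) = \sum_{d \mid n} \phi(d) \, \bigl|\{k \in S : d \mid k\}\bigr| = \sum_{d \mid n} m_d(S)\, \phi(d),
\]
which is the claimed equality.

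I do not anticipate a genuine obstacle here, as the argument is a routine double-counting once the Gauss identity is in hand. The only point requiring mild care is the bookkeeping for the multiset $S$: the interchange of summation must count each occurrence of a repeated element separately, so I would emphasize that $m_d(S)$ is defined as the number of multiples of $d$ \emph{with multiplicity}. Provided that convention is stated explicitly, the swap of summation order is valid and the proof is complete.
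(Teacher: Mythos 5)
Your proof is correct, and it takes a genuinely different (though closely related) route from the paper's. The paper argues by induction on the number of elements of $S$: remove one copy of an element $k$, apply the inductive hypothesis to the smaller multiset $S'$, and note that restoring $k$ increases the left-hand sum by $k$ and the right-hand sum by $\sum_{d \mid k} \phi(d)$, since $m_d(S) = m_d(S') + 1$ exactly when $d \mid k$; the Gauss identity $\sum_{d \mid k} \phi(d) = k$ then closes the induction. You rely on the same key lemma but dispense with the induction entirely: you expand every $k \in S$ as $\sum_{d \mid k} \phi(d)$ simultaneously and interchange the two finite sums, observing correctly that (i) any divisor $d$ of an element $k \in S$ is itself a divisor of $n$, so the outer sum may be taken over all $d \mid n$ (divisors of $n$ dividing no element of $S$ contribute $m_d(S)\phi(d) = 0$ and are harmless), and (ii) for fixed $d$ the coefficient of $\phi(d)$ is the number of elements of $S$, counted with multiplicity, that are multiples of $d$, which is $m_d(S)$ by definition. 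What your approach buys is brevity and transparency: the double count is visible in a single display rather than distributed across an inductive step. What the paper's induction buys is that the multiset bookkeeping is automatic, since each step adds exactly one occurrence of one element; your explicit stipulation that $m_d(S)$ counts multiples \emph{with multiplicity} addresses precisely the point the induction handles implicitly, so your argument has no gap.
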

\begin{proof}
This is an induction on the number of elements of $S$. The theorem is clear for $\abs{S} = 0$; suppose it holds for $\abs{S} = r$. Now let $S$ have $r + 1$ elements and choose $k \in S$. Let $S'$ be $S$ with one fewer copy of $k$; the theorem holds for $S'$. Adding $k$ to $S'$ increments the left-hand sum by $k$ and the right-hand sum by $\sum_{d \mid k} \phi(d)$, since $m_d(S) = m_d(S') + 1$ for $d \mid k$ and $m_d(S) = m_d(S')$ for all other $d$. But $\sum_{d \mid k} \phi(d) = k$ \cite[Proposition 2.2.4]{IR}, so the statement holds for $S$. 
\end{proof}

\begin{lemma} \label{case3}
    For $N \ge 7$, let $5 \le p, q \le N$ be distinct primes chosen uniformly at random, and let $n = pq$. Let $E(\Z/n\Z)$ be an elliptic curve chosen uniformly among those for which $n$ is an elliptic Korselt number of Type I. Then
    \[\lim \limits_{N \to \infty} \Pr[\text{$p$ and $q$ are not anomalous for $E$ and } (p + 1 - a_p)(q + 1 - a_q) = n + 1 - a_n] = 0.\]
\end{lemma}
\begin{proof}
Let $N$, $p$, $q$, and $E$ be as in the lemma statement. We impose the additional restriction that $q \ge 67$; this does not affect our proof, since the probability of a randomly selected prime below $N$ being less than $67$ approaches $0$ as $N$ approaches $\infty$. Assume that $a_p \neq 1$, $a_q \neq 1$, and $(p + 1 - a_p)(q + 1 - a_q) = n + 1 - a_n$. We have
\begin{align*}
    (p + 1 - a_p)(q + 1 - a_q) &= pq + 1 - a_p a_q\\
    2a_p a_q + p + q &= pa_q + qa_p + a_p + a_q\\
    a_p &= \frac{p + q - (p + 1)a_q}{q + 1 - 2a_q}.
\end{align*}

Thus, $q + 1 - 2a_q$ divides $p + q - (p + 1)a_q$. Subtracting $q + 1 - 2a_q$ from the dividend, we have
\[q + 1 - 2a_q \mid p - pa_q + a_q - 1 = (p - 1)(1 - a_q).\]
Letting $x = a_q - 1$, $p' = p - 1$, and $q' = q - 1$, we find that $q' - 2x$ divides $p'x$. It follows that
\begin{equation} \label{divisEq}
\frac{q' - 2x}{\gcd(q' - 2x, x)} = \frac{q' - 2x}{\gcd(q', x)} \mid p'.
\end{equation}
We claim that the probability for randomly chosen $5 \le p, q \le N$ that there exists $x \in [-2\sqrt{q} - 1, 2\sqrt{q} - 1]$ such that the above property is satisfied approaches zero as $N\rightarrow \infty$; this is sufficient to prove our lemma.

To prove this claim, fix $q$ (and thus $q'$) and examine how many values of $p' < N$ (and thus $p \le N$) satisfy the condition in (\ref{divisEq}) for some $x$ in the interval.

For a fixed $x$, the number of values of $p'$ divisible by $\frac{q' - 2x}{\gcd(q', x)}$ is bounded above by
\[\frac{N}{\frac{q' - 2x}{\gcd(q', x)}} = \frac{N\gcd(q', x)}{q' - 2x} \le \frac{2N\gcd(q', x)}{q'}.\]
(The last step is justified by the fact that $q \ge 67$.) Thus, the total number number of values of $p'$ that are divisible by $\frac{q' - 2x}{\gcd(q', x)}$ for some $x \in [-2\sqrt{q} - 1, 2\sqrt{q} - 1]$ is at most
\[\sum \limits_{\substack{x \in [-2\sqrt{q} - 1, 2\sqrt{q} - 1] \\ x \neq 0}} \frac{2N\gcd(q', x)}{q'} \le 2 \sum \limits_{x = 1}^{\floor{2\sqrt{q} + 1}} \frac{2N\gcd(q', x)}{q'} = \frac{4N}{q'} \sum \limits_{x = 1}^{\floor{2\sqrt{q} + 1}} \gcd(q', x).\]
Now, let $g(k) = \sum_{x = 1}^k \gcd(x, k)$. We claim that
    \[\sum \limits_{x = 1}^{\floor{2\sqrt{q} + 1}} \gcd(q', x) \le g(q') \cdot \frac{2\sqrt{q} + 1}{q'}.\]

For $n$ implicit, define the multiset $S_{a, k} = \{\gcd(x, n) \mid x \in \{a, a + 1, \dots, a + k - 1\}\}$. Observe that for all $d \mid n$, holding $k$ constant, $m_d(S_{a, k})$ is minimal for $a = 1$. It follows from Proposition~\ref{mdphi} that
\begin{equation} \label{gcdak}
\sum \limits_{x = a}^{a + k - 1} \gcd(x, n)
\end{equation}
is minimized for $a = 1$. In particular, let $h(a)$ be (\ref{gcdak}) with $n = q'$ and $k = \floor{2\sqrt{q} + 1}$. Note that
\begin{equation} \label{sumofh}
h(1) + h(2) + \dots + h(q') = g(q') \cdot \floor{2\sqrt{q} + 1},
\end{equation}
since the fact that $\gcd(q', q' + x) = \gcd(q', x)$ means that for every $x \in \{1, 2, \dots, q'\}$, $x$ appears $\floor{2\sqrt{q} + 1}$ times in (\ref{sumofh}). Since $h(1)$ is the smallest value among the $q'$ values in (\ref{sumofh}), we obtain
\[\sum \limits_{x = 1}^{\floor{2\sqrt{q} + 1}} \gcd(q', x) \le g(q') \cdot \frac{2\sqrt{q} + 1}{q'},\]
as desired. Thus, we have
\[\sum \limits_{\substack{x \in [-2\sqrt{q} - 1, 2\sqrt{q} - 1] \\ x \neq 0}} \frac{2N\gcd(q', x)}{q'} \le \frac{4N}{q'} \sum \limits_{x = 1}^{\floor{2\sqrt{q} + 1}} \gcd(q', x) \le \frac{4N}{q'} g(q') \cdot \frac{2\sqrt{q} + 1}{q'}.\]

Now, the number of primes $p \le N$ is on the order of $\frac{N}{\log N}$. Thus, the probability that $p$ is chosen such that the above divisibility property is satisfied for some $x$ is
\[O\left( \frac{4 \log N}{q'} g(q') \cdot \frac{2\sqrt{q} + 1}{q'} \right) = O\left( \log N \cdot g(q') q^{\frac{-3}{2}} \right).\]

It is known that $g(k) = O(k^{1 + \epsilon})$ for every positive $\epsilon$ \cite[Theorem 3.2]{gcdsum}. Thus, the probability above is $O \left( \log N \cdot q^{\frac{-1}{2} + \epsilon} \right)$ for every positive $\epsilon$, as a function of $q$ and $N$.

Now we express the probability as a function of just $N$, randomly choosing $q$ to be a prime below $N$. The probability that $q \le N^\frac{1}{2}$ is on the order of
\[\frac{{N^{1/2}}/{\log N^{1/2}}}{{N}/{\log N}} = \frac{2 N^\frac{1}{2}}{N},\]
which is on the order of $N^\frac{-1}{2}$. If $q > N^\frac{1}{2}$, then the above probability is $O \left( \log N \cdot N^{\frac{-1}{4} + \epsilon} \right)$ for all $\epsilon$. Thus, the total probability is at most on the order of $N^\frac{-1}{2} + \log N \cdot N^{\frac{-1}{4} + \epsilon}$, which approaches zero as $N\rightarrow \infty$, and so we are done.
\end{proof}

\begin{theorem} \label{prob_thm}
    For $N \ge 7$, let $5 \le p, q \le N$ be uniformly chosen distinct primes, and let $n = pq$. Let $E(\Z/n\Z)$ be an elliptic curve chosen uniformly among those for which $n$ is an elliptic Korselt number of Type I. We have
    \[\lim \limits_{N \to \infty} \Pr[\text{$p$ and $q$ are anomalous primes for $E$}] = 1.\]
\end{theorem}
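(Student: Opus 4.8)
The plan is to obtain the theorem as a short union-bound argument combining the three preceding lemmas; all of the analytic work has already been done there, so this final step is purely a partition of the sample space. Since
\[
\Pr[\text{$p$ and $q$ are anomalous for $E$}] = 1 - \Pr[\text{at least one of $p, q$ is not anomalous for $E$}],
\]
it suffices to prove that the probability of the complementary event tends to $0$ as $N \to \infty$. Recall that a prime $\ell$ is anomalous for $E$ exactly when $a_\ell = 1$, i.e. $\#E(\F_\ell) = \ell$, so the complementary event is precisely the event that $a_p \neq 1$ or $a_q \neq 1$.

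First I would partition this complementary event into four mutually exhaustive sub-events, chosen to match the three lemmas. Writing the conditions in terms of anomalousness, the complement is the union of: (i) $a_p = 1$ and $a_q \neq 1$; (ii) $a_p \neq 1$ and $a_q = 1$; (iii) $a_p \neq 1$, $a_q \neq 1$, and $(p + 1 - a_p)(q + 1 - a_q) \neq n + 1 - a_n$; and (iv) $a_p \neq 1$, $a_q \neq 1$, and $(p + 1 - a_p)(q + 1 - a_q) = n + 1 - a_n$. These four cases are clearly disjoint and their union is exactly the event that $p$ and $q$ are not both anomalous, so the partition is exhaustive. Sub-event (i) is the event bounded in Lemma~\ref{case1}, and sub-event (ii) follows from the same lemma after interchanging the roles of $p$ and $q$, which is legitimate because the hypotheses of the lemma statement are symmetric in the two randomly chosen primes. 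Sub-events (iii) and (iv) are precisely the events whose probabilities are shown to vanish in Lemma~\ref{case2} (invoking Conjecture~\ref{odc}) and Lemma~\ref{case3}, respectively.

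With the partition in hand, I would finish by the union bound: the probability of the complementary event is at most the sum of the four sub-event probabilities,
\[
\Pr[\text{not both anomalous}] \le \Pr[(\mathrm{i})] + \Pr[(\mathrm{ii})] + \Pr[(\mathrm{iii})] + \Pr[(\mathrm{iv})],
\]
and each summand tends to $0$ as $N \to \infty$ by Lemmas~\ref{case1} (applied in both orientations), \ref{case2}, and \ref{case3}. Hence the left-hand side tends to $0$, and taking complements gives $\Pr[\text{$p$ and $q$ are anomalous for $E$}] \to 1$, as claimed. There is no genuine obstacle in this step; the only points requiring care are verifying that the four cases genuinely exhaust the complement (so that no leftover event is unaccounted for) and justifying the symmetric application of Lemma~\ref{case1}, since everything else is inherited from the lemmas. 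The dependence on Conjecture~\ref{odc} enters solely through Lemma~\ref{case2}, which is why the theorem is stated conditionally.
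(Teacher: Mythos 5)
Your proposal is correct and follows essentially the same route as the paper: the paper likewise partitions the event that $p$ and $q$ are not both anomalous into the cases handled by Lemmas~\ref{case1}, \ref{case2} (where Conjecture~\ref{odc} enters), and \ref{case3} (treating ``exactly one of $p,q$ is anomalous'' as a single case, which your symmetric double application of Lemma~\ref{case1} covers), and concludes that each piece has vanishing probability. The only ingredient the paper includes that you omit is an appeal to Deuring's theorem together with the Chinese Remainder Theorem to show that for \emph{every} pair $(p,q)$ there exists a curve $E$ making $n=pq$ a Type I elliptic Korselt number, so that the conditional sampling in the theorem statement is well-defined for all pairs --- a point your write-up silently presupposes, though the same presupposition is already built into the statements of the three lemmas.
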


\begin{proof}
A result of Deuring \cite{D} states that for all primes $p$, for every integer $-2\sqrt{p} \le t \le 2\sqrt{p}$, there is an elliptic curve over $\F_p$ with order $p + 1 - t$. In particular, for every $p$ there is an elliptic curve that is anomalous over $\F_p$. Thus, for any two primes $p$ and $q$, we may use the Chinese Remainder Theorem to construct a curve over $\Q$ that is anomalous both when reduced over $\F_p$ and over $\F_q$. It follows by Proposition~\ref{anomProdKorselt} that for all $(p, q)$ there is a curve $E$ that makes $p$ and $q$ anomalous and therefore makes $n = pq$ elliptic Korselt number of Type I.

Suppose now that $n = pq$ is an elliptic Korselt number of Type I for some elliptic curve $E$. Then the cases in which $p$ and $q$ are not both anomalous primes for $E$ are as follows:
\begin{enumerate}[label=(\arabic*)]
    \item \label{case1stated} Exactly one of $p$ and $q$ is anomalous for $E$.
    \item \label{case2stated} Neither $p$ nor $q$ is anomalous for $E$, and $(p + 1 - a_p)(q + 1 - a_q) \neq n + 1 - a_n$.
    \item \label{case3stated} Neither $p$ nor $q$ is anomalous for $E$, and $(p + 1 - a_p)(q + 1 - a_q) = n + 1 - a_n$.
\end{enumerate}
Lemmas \ref{case1}, \ref{case2}, and \ref{case3} show that the probability that $p$, $q$, and $E$ satisfy cases \ref{case1stated}, \ref{case2stated}, \ref{case3stated}, respectively, goes to zero as $N\rightarrow\infty$. Therefore, as $N\rightarrow\infty$, the probability that $p$ and $q$ are both anomalous for $E$ approaches $1$. This completes the proof.
\end{proof}

\section{Conclusion}\label{concl}
Fix an elliptic curve $E$. Then Proposition~\ref{anomProdKorselt} and Proposition 11 from \cite{Korselt} give the following implications.
\begin{center}
\small{product of anomalous primes} $\xrightarrow {\text{Prop.} \; \ref {anomProdKorselt}}$ \small{elliptic Korselt Type I}$\xrightarrow {\text{Prop. } 11}$ {\small elliptic Carmichael}
\end{center}
{\flushleft These implications, together with Theorem $1.2$ from \cite{Q}, imply the following result.} 

\begin{corollary}
Assuming the Hardy-Littlewood Conjecture, there are infinitely many elliptic Korselt numbers of Type I for the curve $E: y^2 = x^3 + D$, where $D\in \mathbb{Z}$ is neither a square nor a cube in $\mathbb{Q}(\sqrt{-3})$ and $D \neq 80d^6$ for any $d \in \mathbb{Z}[(1+\sqrt{-3})/2]$.
\end{corollary}

In section \ref{type 1}, we explore the strictness of the left-most inclusion in the above diagram. Theorem \ref{gen_silv} establishes deterministic conditions under which an elliptic Korselt number of Type I can be a product of anomalous primes. Furthermore, for $n = pq$ a product of two distinct primes, Theorem \ref{prob_thm} states that nearly all the elliptic curves that make $n$ an elliptic Korselt number of Type I also have $n$ a product of anomalous primes. If an extension to Theorem \ref{prob_thm} can be made to arbitrarily many primes, then in many ways anomalous primes form the building blocks of squarefree elliptic Korselt numbers of Type I.

{\flushleft {\bf Acknowledgments.} The authors would like to thank Lawrence Washington for many helpful discussions and suggestions regarding this work. We also thank Steven J.\ Miller for helpful discussions on an earlier draft of this paper}

\section{\textbf{Appendix.} Numerical Evidence for Proposition \ref{odc}}

We wrote a program that takes as input an integer $N$, randomly choses two distinct primes $5 \le p, q \le N$, and a random elliptic curve whose order over $\F_p$ and $\F_q$ divides $pq + 1 - a_p a_q$. We ran the program $1000$ times, each time recording whether $(p + 1 - a_p)(q + 1 - a_q) = pq + 1 - a_p a_q$. Below, $\Pr(N)$ represents the fraction of the $1000$ times that this property held. 
\vspace{0.1in}

\begin{table}[h]
  \begin{minipage}[h]{1 cm}
\begin{tabular}{cc}
        $N$ & $\Pr(N)$\\
        \hline
        64&.614\\
        128&.725\\
        256&.823\\
        512&.869\\
        1024&.924\\
        2048&.939\\
        4096&.965\\
        8192&.976\\
        16384&.994\\
        32768&.995\\
        65536&.995\\
\end{tabular}

\end{minipage}
 \hspace{3cm}
 \begin{minipage}[h]{10cm}

        \begin{tikzpicture}[scale=0.8]
            \small
            \draw (5, 4) -- (5, 4.4);
            \draw (5, 4.6) -- (5, 10);
            \draw (5, 4) -- (5.4, 4);
            \draw[->] (5.6, 4) -- (16.4, 4);
            \draw (5.4, 3.8) -- (5.4, 4.2);
            \draw (5.6, 3.8) -- (5.6, 4.2);
            \draw (4.8, 4.4) -- (5.2, 4.4);
            \draw (4.8, 4.6) -- (5.2, 4.6);
            \draw[blue] (6,6.1) -- (7,7.3) -- (8, 8.2) -- (9, 8.7) -- (10, 9.2) -- (11, 9.4) -- (12, 9.7) -- (13, 9.8) -- (14, 9.9) -- (15, 10) -- (16, 10);

            \foreach \x/\y in {6/.61, 7/.73, 8/.82, 9/.87, 10/.92, 11/.94, 12/.97, 13/.98, 14/.99, 15/1.0, 16/1.0}{

                \draw[fill=white,draw=blue] (\x,10*\y) circle (2pt) node[above,xshift=-2pt] at (\x,10*\y) {\y};
                \node[anchor=north] at (\x, 3.9) {\x};
                \draw (\x, 3.9) -- (\x, 4);
            }
            \node[anchor=north] at (5, 3.9) {0};
            \draw (5, 3.9) -- (5, 4);

            \foreach \y in {.5, .6, .7, .8, .9, 1}{
                \node[anchor = east] at (4.9, 10*\y) {\y};
                \draw (4.9, 10*\y) -- (5, 10*\y);
            }
            \node[anchor = east] at (4.9, 4) {0};
            \draw (4.9, 4) -- (5, 4);

            \node[anchor=east,text width=1cm,align=left] at (4.3,7) {$\Pr(N)$};
            \node[anchor=north,text width=1cm,align=center] at (10.5,3.4) {$\log_2(N)$};
        \end{tikzpicture}
    \hphantom{$\mathrm{Pr}(N)$}
    \caption{\small $\Pr(N)$ v.s. $\log_2(N)$, rounded to two significant digits.} 
 \end{minipage}
\end{table}

\begin{thebibliography}{1}
\bibitem{A}
A. O. L. Atkin, Manuscript, {Lecture notes of a conference}, Boulder, Colorado, (1986).


\bibitem{BH-EK}
L. Babinkostova, A. Hern\'andez-Espiet, H. Kim, {\emph{On Types of Elliptic Pseudoprimes}}, preprint: arXiv:1710.05264v1.

\bibitem{gcdsum} 
K. A. Broughan, {\em The gcd-sum function}, { Journal of Integer Sequences}, Vol.~4 (2001), Article 01.2.2.

\bibitem{C} 
G. N. Copley, {\em Recurrence Relations for Solutions of Pell's Equation}, {The American Mathematical Monthly}, Vol. 66, No. 4 (1959), 288--290.

\bibitem{CF}
H. Cohen and G. Frey, \emph{Handbook of Elliptic and Hyperelliptic Curve Cryptography}, {Chapman\&Hall/CRC}, (2006).

\bibitem{Cox}
D. A. Cox, \emph{Primes of the form $x^2+ny^2$}, {John Wiley \& Sons}, New York, (1989).

\bibitem{D}
M. Deuring, \emph{Die Typen der Multiplikatorenringe elliptischer
Functi\"onenkorper}, {Abh. Math. Sem. Univ. Hamburg}, Vol. 14 (1941), 197--272.


\bibitem{EPT}
A. Ekstrom, C. Pomerance and D. S. Thakur, \emph{Infinitude  of  elliptic Carmichael numbers}, {J. Aust. Math. Soc.} Vol. 92 (2012), 45--60.

\bibitem{GK}
S. Goldwasser and J. Kilian, \emph{Almost All Primes Can Be Quickly Certified}, {Proc. 18th Annual ACM Symposium on Theory of Computing}, (1986) 316--329.

\bibitem{DG}
D. M. Gordon, \emph{On the number of elliptic pseudoprimes}, { Mathematics of Computation} Vol. 52:185 (1989), 231--245.

\bibitem{DG2}
D. M. Gordon, \emph{Pseudoprimes on elliptic curves}, {Th{\'e}orie des nombres: Proceedings of the 1987 International Number Theory Conference}, deGruyter, Berlin, (1989), 290--305.

\bibitem{GP}
D. M. Gordon and C. Pomerance, \emph{The  distribution  of  Lucas  and  elliptic  pseudoprimes}, {Mathematics of Computation} Vol. 57 (1991), 825--838.

\bibitem{IR}
K. Ireland and M. Rosen, {\em A Classical Introduction to Modern Number Theory}, {Graduate Texts in Mathematics} 84
\newblock Springer-Verlag, 2nd edition (1998).

\bibitem{K1}
N. Koblitz, \emph{Primality of the number of points on an elliptic curve over a finite field}, {Pacific Journal of Mathematics}, 131:1 (1988), 157--165.

\bibitem{M}
R.D. Mauldin, \emph{A generalization of Fermat's Last Theorem: The Beal Conjecture and Prize Problem}, {Not. Amer. Math. Soc.} 44, (1997) 1436--1437.

\bibitem{Mazur}
B. Mazur, \emph{Rational Points of Abelian Varieties with Values in Towers of Number Fields}, {Invent. Mathematics} 18 (1972), 183--266.

\bibitem{Bachet}
L.J. Mordell, \emph{Diophantine Equations}, {Pure and Applied Mathematics}, Vol. 30, Academic Press, London and New York, 1969.

\bibitem{NM}
Y. Nogami and Y. Morikawa, \emph{The Orders of Elliptic Curves $y^2=x^3+b$, $b\in {\F_q}^*$}, {Memoirs of the Faculty of Engineering}, Okayama University, Vol. 40 (2006), 83--94.

\bibitem{Q}
H. Qin, \emph{Anomalous primes of the elliptic curve $E_D:y^2=x^3+D$}, {Proceedings of London Mathematics Society}, Vol. 3:112 (2016), 415--453.

\bibitem{Korselt}
J.H. Silverman, \emph{Elliptic Carmichael Numbers and Elliptic Korselt Criteria}, {Acta Arithmetica} Vol. 155:3, (2012) 233--246.

\bibitem{silBook2}
J.H. Silverman, \emph{Advanced Topics in the Arithmetic of Elliptic Curves}. Graduate texts in math 151 (1994).

\bibitem{W1}
L.C. Washington, {\em Number Theory: Elliptic Curves and Cryptography}, Vol.~50 of
{Discrete Mathematics and Its Applications}.
  \newblock Chapman \& Hall/CRC, 2nd~ed., (2008).

\bibitem{We}
Weil, Andr\'{e}, L'arithm\'{e}tique sur les courbes alg\'{e}briques, {Acta Mathematica} 52:1 (1929), 281--315.

\bibitem{Z}
S. Zhang, \emph{Elliptic curves, L-functions, and CM-points},  { Current developments in mathematics, Int. Press, Somerville, MA}  (2002), 179--219.

\end{thebibliography}
\end{document}